 \theoremstyle{plain}
 \newtheorem{thm}{Theorem}
 \newtheorem{defin}{Definition}
 \newtheorem*{thm*}{Theorem}
 \theoremstyle{remark}
 \newtheorem*{rmks*}{Remarks}
 \numberwithin{equation}{section}
  \newcommand{\lla}{\left\langle} 
   \newcommand{\rra}{\right\rangle} 
\begin{document}
 	\author{Susanna Risa  \footnote{Dipartimento di Matematica, Universit\`a di Roma ``Tor Vergata'', Via della Ricerca Scientifica 1, 00133, Roma, Italy. E-mail: risa@mat.uniroma2.it}
 		, Carlo Sinestrari \footnote{Dipartimento di Ingegneria Civile e Ingegneria Informatica, Universit\`a di Roma ``Tor Vergata'', Via Politecnico 1, 00133, Roma, Italy. E-mail: sinestra@mat.uniroma2.it}
 	}
 	\title{Strong spherical rigidity of ancient solutions of expansive curvature flows\footnotetext{Published on the Bulletin of the London Mathematical Society, Volume 52, Issue 1 (pp. 94-99). doi:10.1112/blms.12308}}
 	\date{}
 	\maketitle
 	\begin{abstract}
 		We consider geometric flows of hypersurfaces expanding by a function of the extrinsic curvature and we show that the homothethic sphere is the unique solution of the flow which converges to a point at the initial time. The result does not require assumptions on the speed other than positivity and monotonicity and it is proved using a reflection argument. Our theorem shows that expanding flows exhibit stronger spherical rigidity, if compared with the classification results of ancient solutions in the contractive case. 	\end{abstract}\medskip
 
 	\section{Introduction}
 	In this short article, we consider expansive curvature flows of hypersurfaces in Euclidean space. The flow is given by a time-dependent family of smooth embeddings $\varphi: M \times (T_0,T_1) \to \mathbb{R}^{n+1}$ of an $n$-dimensional closed manifold $M$, which evolves according to
 	\begin{equation} \label{ICF}
 	\frac{\partial \varphi}{\partial t}=\frac{\nu}{F}.
 	\end{equation}
 	Here $\nu$ is the outer normal vector and $F$ is a positive, symmetric function of the principal curvatures, strictly increasing with respect to each argument. 
The most important example is the inverse mean curvature flow, see e.g. \cite{HIRP, HI}, where $F$ is given by the mean curvature, which has found relevant applications such as the proof of the Riemannian Penrose inequality in general relativity \cite{HIRP}, or an estimate for the smooth Yamabe invariants of a class of $3$-manifolds \cite{BrayNeves}. In general, such flows are parabolic and they possess a solution for small positive times starting from any given smooth initial hypersurface for which $\frac 1F$ is well-defined. Rather than the Cauchy problem, we focus here on certain special solutions of the flow, an example of which is given by the so-called ancient solutions, defined in time intervals of the form $(-\infty,T_1)$. Such solutions typically arise as limit flows in rescaling procedures, see for example \cite{H2}, thus they play an important role in the asymptotic analysis of the evolution.

 A standard sphere expands by homotheties under \eqref{ICF}, since the equation translates into an ODE for the radius. Backwards in time, the radius becomes smaller and converges to zero as $t \downarrow T_0$, for some $T_0$ which is either finite or infinite depending on the growth rate of $F$. In this paper, we show that the expanding sphere is the only compact embdedded solution of a flow of the form \eqref{ICF} which "comes out of a point", meaning that it converges to a point in the Hausdorff sense as $t \downarrow T_0$. No a priori assumption of asymptotical roundness, or convergence of rescalings as $t \downarrow T_0$ is required. The proof relies on a powerful result by Chow and Gulliver \cite{CG1, CG2}, which adapts Alexandrov's reflection technique to the setting of geometric flows. As in \cite{CG1, CG2}, our result holds under very general hypotheses on $F$: besides positivity and monotonicity, no assumptions such as homogeneity, or convexity/concavity are needed. As a particular case, our theorem implies that the spheres are the only homothetically expanding compact solutions of these flows, a result which was first proved in \cite{DLW} for the inverse mean curvature flow and in \cite{CCF} for other $1$-homogeneous flows.
 
Although the proof is a relatively simple application of the reflection technique, our result is somehow unexpected, in the light of the recent studies on ancient solutions of contractive flows, i.e. evolution equations where the right hand side in \eqref{ICF} has the form $-F\nu$, with $F$ as above.
Under such flows, spheres contract homothetically and always give rise to ancient solutions. On the other hand, various nontrivial examples of compact ancient solutions have been constructed during the years, e.g. \cite{An1, BLT,XJWang}. In particular, we recall the White Ovaloids \cite{Ang, HH,W03}: they are convex solutions which, as $t\to -\infty$, become more and more eccentric, but sweep the whole space. In analogy, one could conjecture the existence of solutions shaped like ellipsoids also in the expanding case, with axes going to zero at different rates as $t \to -\infty$. However, this is excluded by the result of this paper: a nonspherical convex ancient solution, if it exists, should come out of a set which is larger than a point, e.g. from a segment. To our knowledge, no compact ancient solution of \eqref{ICF} besides the spherical one has currently been described.

In the contractive case, various authors have found characterizations of the shrinking spheres as the unique ancient solutions which satisfy suitable geometric conditions, such as uniform convexity, uniform pinching of inner and outer radii, or a bound on the diameter growth, see \cite{HH, HS, LL, RS}. Our theorem can be regarded as a counterpart in the expanding case of these rigidity results for those speeds which admit ancient flows, since we obtain that the sphere is the only compact embedded ancient solution which comes out of a point. Nevertheless, a comparison shows that the rigidity of the spherical solution in the expanding case is much stronger.
In fact, the arguments of \cite{HH,HS,LL,RS} depend crucially on the properties of the specific classes of speeds considered in each case, such as degree of homogeneity, convexity or concavity, behaviour at the boundary of the cone of definition or specific properties of the equations involved. In addition, all geometric hypotheses used to characterize the spherical solution are replaced here by the weaker property of coming out of a point. We also observe that our rigidity result in the expanding case only depends on this geometric assumption, regardless if the initial time is finite or not, and thus it also holds for those speeds $F$ whose growth is not compatible with the existence of compact ancient solutions.

It is interesting to notice, on the other hand, that there are similarities with the results for contractive flows in the spherical ambient space $\mathbb{S}^{n+1}$ obtained by Bryan-Louie \cite{BrLo} and Bryan-Ivaki-Scheuer \cite{BIS} for curves and higher dimensional hypersurfaces respectively. In this setting, there exists an ancient solution, called spherical cap, given by a family of shrinking geodesic spheres, which converge to an equator as $t \to -\infty$. These authors have established the uniqueness of the spherical cap among all ancient solutions for general speeds, under the only assumption of backwards convergence to an equator. We observe that the arguments in \cite{BIS, BrLo} also rely on a reflection technique, although different than the one used here, as well as on the symmetries of the limiting equator. We also mention the papers by Bourni, Langford and Tinaglia \cite{BLT, BLT2}, where the result by Chow and Gulliver has been applied to prove axial symmetry of certain nonspherical ancient solutions of the Mean Curvature Flow.
 	
 	\section{Rigidity of solutions coming out of a point}
 
 In what follows, we consider the flow \eqref{ICF}, where the speed $F$ is a smooth symmetric function of the principal curvatures $\lambda_1,\dots,\lambda_n$ and is defined on an open convex symmetric cone $\Gamma \subset \mathbb{R}^n$ which contains the positive diagonal, i.e. the $n$-tuples of the form $(\lambda, \dots,\lambda)$ for some $\lambda>0$. We assume
\begin{equation}\label{ipotesi}
F>0, \quad \quad \frac{\partial F}{\partial \lambda_i}>0  \quad\quad \forall \, i=1,\dots,n, \quad \mbox{ everywhere on }\Gamma.
\end{equation}

We consider a smooth solution of the flow \eqref{ICF}, defined on a time interval  $t \in (T_0,T_1)$, with $-\infty \leq T_0 < T_1 \leq +\infty$, and we denote the evolving hypersurface by $\varphi(M,t)=M_t$. By saying that $M_t$ is a solution of \eqref{ICF}, we assume in particular that $M_t$ is admissible for $F$, meaning that the principal curvatures of $M_t$ belong to $\Gamma$ for all $t$. In addition, we denote by $\Omega_t$ the bounded open domain whose boundary is $M_t$. Our positivity assumption on $F$ implies that the flow is expansive, that is, $M_{t_1} \subset \Omega_{t_2}$ whenever $t_1<t_2$. 

 	\begin{defin}\label{.oO}
 		Let $M_t$ be a solution of \eqref{ICF}. We say that $M_t$ comes out of a point $y_{\infty} \in \mathbb{R}^{n+1}$ if for every $r>0$ there exists a time $\overline{t} \in (T_0,T_1)$ such that $M_{\overline{t}}\subset B_r(y_{\infty})$, where $B_r(y_{\infty})$ is the ball of radius $r$ centered at $y_{\infty}$.
 	\end{defin}

 \begin{rmks*}~
 	\begin{enumerate}
 	\item Any spherical solution to \eqref{ICF} has a radius $r(t)$ which evolves according to $\dot r(t)=1/(\psi(r(t)))$,  where we have defined $\psi(r)=F\left(\frac 1r,\dots,\frac 1r\right)$. Therefore, spherical solutions are ancient if and only if
	\begin{equation}\label{growth}
	\int_0^1 \psi(r)\,dr=+\infty.
	\end{equation} 
 	In particular, if $F$ is an $\alpha$-homogeneous function of the principal curvatures, a sphere is ancient if and only if $\alpha \geq 1$. Flows with homogeneous speed have been widely investigated in \cite{GerICF, GerICFp, UrbICF}, where convergence forwards in time to a sphere has been established for starshaped or convex hypersurfaces.
 	\item If condition \eqref{growth} is not satisfied, then the flow \eqref{ICF} does not admit any compact ancient solution. Suppose in fact that there exists a compact solution $M_t$ defined on an interval $(-\infty, T_1)$, and fix any $T<T_1$. Let $\rho_-(T)$ be the radius of 
 the largest ball enclosed by $M_T$, and consider the spherical solution with radius $\rho_-(T)$ at time $T$. By assumption, this solution 
comes out of a point at a finite time $T_S< T$. Since $M_t$ is defined and smooth for all negative times, it encloses some sphere with positive radius at time $T_S$. By comparison, the evolution of this other sphere remains inside $M_t$ for all following times. On the other hand, by construction, its radius at time $T$ is strictly larger than $\rho_-(T)$, which gives a contradiction.
 	\item A similar argument proves that if the sphere is ancient, then any solution coming out of a point is also ancient.
 	\item  The condition in Definition \ref{.oO} is trivially satisfied by any expanding soliton. In addition, for an ancient solution, it is
	implied by several natural (but stronger) geometric assumptions on the evolving submanifold, such as any of the following:
	\begin{enumerate}
	\item a uniform pinching $\rho_-(t)\geq\epsilon_0 \rho_+(t)$ between inner and outer radius,
	\item a uniform curvature pinching $\min\limits_i \lambda_i \geq \epsilon_0 \max\limits_i \lambda_i >0$ on $M_t$, 
	\item uniform starshapedness $\langle \varphi,\nu \rangle \geq \epsilon_0 |\varphi|$ on $M_t$,
	\end{enumerate}
	where $\epsilon_0>0$ is a constant independent of $t$. On the other hand, our condition is compatible with quite different behaviours: it is satisfied, for example, by an ellipsoid whose axes go to zero at different rates as $t \to T_0$, so that (a),(b),(c) would fail.
 \end{enumerate}
 \end{rmks*}
 
The theorem we prove in this paper shows that the property of coming out of a point is particularly rigid.
 	\begin{thm}\label{maintheo}
	Let $F:\Gamma \to \mathbb{R}$ be a smooth symmetric function satisfying \eqref{ipotesi}. Then, 
 		any smooth, closed, embedded solution of \eqref{ICF} coming out of a point in $\mathbb{R}^{n+1}$ is a family of expanding spheres.
 	\end{thm}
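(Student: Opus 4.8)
The plan is to run Alexandrov's moving-plane method in its parabolic form due to Chow and Gulliver \cite{CG1, CG2}, exploiting that the surface is squeezed into arbitrarily small balls around $y_\infty$ as $t \downarrow T_0$. The central tool is the preservation of the reflection property. Since the flow \eqref{ICF} is invariant under Euclidean isometries, because $F$ depends only on the principal curvatures and the speed $\nu/F$ is geometric, for any hyperplane $H$ the reflected hypersurface $R_H(M_t)$ is again a solution of \eqref{ICF} on the same time interval; its principal curvatures lie in $\Gamma$ because reflection is an isometry, so it is admissible. By the comparison principle for the strictly monotone, hence strictly parabolic, operator \eqref{ipotesi}, if at some time the reflection across $H$ of the cap of $M_t$ on the far side is contained in $\overline{\Omega_t}$, then the same containment persists for all later times. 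This is precisely the Chow--Gulliver theorem under the hypotheses \eqref{ipotesi}, and it is the engine of the argument.

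Fix a unit vector $e$ and, for $s \in \mathbb{R}$, let $H_{e,s} = \{x : \langle x,e\rangle = s\}$, with half-space $H_{e,s}^+ = \{\langle x,e\rangle > s\}$ and reflection $R_{e,s}$. Write $s^* = \langle y_\infty, e\rangle$, so that $H_{e,s^*}$ is the hyperplane through $y_\infty$ orthogonal to $e$. Given a target time $\tau \in (T_0,T_1)$ and any $s > s^*$, pick $r < s - s^*$. By Definition \ref{.oO} there is a time at which $M$ lies in $B_r(y_\infty)$, and since the flow is expansive this containment then holds for all earlier times; hence I may choose $\bar t \in (T_0,\tau)$ with $M_{\bar t}\subset B_r(y_\infty)$. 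Then $M_{\bar t}$ lies in the open half-space $H_{e,s}^-$, its cap in $\overline{H_{e,s}^+}$ is empty, and the reflection property holds trivially at $\bar t$. The preservation result propagates it forward to $\tau$, giving $R_{e,s}(M_\tau \cap \overline{H_{e,s}^+}) \subseteq \overline{\Omega_\tau}$. As this is a closed condition and $M_\tau$ is a fixed smooth compact hypersurface, letting $s \downarrow s^*$ yields the reflection property at the plane through $y_\infty$ itself.

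Running the identical argument with $-e$ in place of $e$, i.e. sliding planes $H_{e,s}$ upward from $s < s^*$, gives the opposite containment $R_{e,s^*}(M_\tau \cap \overline{H_{e,s^*}^-}) \subseteq \overline{\Omega_\tau}$. Applying $R_{e,s^*}$ to one inclusion and combining it with the other forces the two caps of $M_\tau$ to be exact mirror images, so $M_\tau$ is symmetric under $R_{e,s^*}$. As $e \in \mathbb{S}^n$ and $\tau$ were arbitrary, every $M_t$ is invariant under reflection across every hyperplane through $y_\infty$; since such reflections generate the orthogonal group fixing $y_\infty$, the connected embedded hypersurface $M_t$ is invariant under all rotations about $y_\infty$ and hence is a sphere centered there. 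Being a solution of \eqref{ICF}, its radius evolves as in the Remarks, and $M_t$ is a family of expanding spheres.

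The main obstacle is not the sliding-plane bookkeeping, which is elementary once the geometry of coming out of a point is used to start the reflection at an empty cap; it is the parabolic preservation of the reflection property, namely the step that two solutions related by a reflection cannot cross. This rests on the strong maximum principle and the Hopf boundary lemma for the strictly monotone operator \eqref{ipotesi}, applied both at interior contact points and at contact points lying on the mirror hyperplane, where $M_t$ meets $H$ orthogonally. Since this is exactly the content of \cite{CG1, CG2} under our hypotheses, I would cite it rather than reprove it, taking care only to verify that the reflected flow is an admissible solution so that the comparison principle applies.
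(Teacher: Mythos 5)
Your overall architecture is the same as the paper's: trap $M_t$ in arbitrarily small balls around $y_\infty$ near $T_0$, obtain a reflection property across hyperplanes close to $y_\infty$, propagate it forward in time via Chow--Gulliver, let the plane tend to $y_\infty$, and conclude symmetry about every hyperplane through the point, hence sphericity. Your bookkeeping (fixing a target time and letting $s \downarrow s^*$, rather than the paper's $\tau(V,c)\to T_0$) is a harmless repackaging of the same limit step, and the final rigidity conclusion is identical.

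The genuine gap is exactly the step you declare trivial: starting Theorem \ref{CGtheo} at a time $\bar t$ when the cap is empty. If $M_{\bar t}$ lies strictly in $H_-(\pi_V^c)$, the strict reflection conditions hold only vacuously, and the Chow--Gulliver mechanism --- a comparison of the two nonempty graphical caps over the mirror plane via the strong maximum principle, with the Hopf boundary lemma along $M_t \cap \pi_V^c$ --- has nothing to compare. The delicate moment is the first contact time $\tau$: there $M_\tau$ is tangent to $\pi_V^c$ from one side, $M_\tau$ and its reflection $M_\tau^\pi$ touch at points lying \emph{on} the mirror plane, and immediately afterwards the newly formed cap genuinely crosses the plane. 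Your guiding principle that ``two solutions related by a reflection cannot cross'' is false in this configuration: $M_t$ and $M_t^\pi$ always intersect along $M_t \cap \pi_V^c$, and near the contact points the surface and its reflection move \emph{toward} each other (the graphs satisfy equations with opposite orientation), so no naive no-crossing argument applies across $\tau$. What must actually be shown is that for $t$ slightly greater than $\tau$ the reflection of the new cap lies in $\Omega_t$ and that $V$ is transversal to $M_t$ along $M_t \cap \pi_V^c$; neither follows from citing \cite{CG1,CG2}, whose hypotheses are degenerate at $\tau$. This is precisely why the bulk of the paper's proof establishes strict reflectability on $(\tau,\tau+\delta]$ by hand: $F>0$ forces the contact points, where $\nu=V$, strictly across the plane; the uniform continuity estimates \eqref{condist}--\eqref{norcond} confine the new cap $M_t^+$ to the image of points with $|\nu-V|<2\epsilon$ (giving transversality); and the segment argument based on \eqref{segmeq} places the reflected cap inside $\Omega_t$. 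Only from this nondegenerate configuration is Theorem \ref{CGtheo} then invoked for later times. Your proposal must supply this bridge across first contact (or a version of the Chow--Gulliver theorem proved explicitly for initial data strictly on one side of the plane); as written it is asserted, not proved. A secondary inaccuracy: paraphrasing the preserved quantity as containment of the reflected cap in $\overline{\Omega_t}$ drops both the strictness and the non-tangency condition, and such a non-strict closed statement is neither what \cite{ChowRef,CG2} prove nor strong enough to survive the contact analysis at $\pi_V^c$.
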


 	Our proof is based on the reflection technique introduced by Alexandrov in his work on CMC surfaces \cite{Aleks}, readapted for curvature flows in a series of papers by Chow and Gulliver \cite{ChowRef, CG1, CG2}. We follow the notation in \cite{ChowRef}.
 	\begin{defin}
 	For $V\in\mathbb{S}^{n}$, $c\in \mathbb{R}$, we denote by $\pi_V^c$ the plane \linebreak $\pi_V^c=\{y \in \mathbb{R}^{n+1}\,|\, \lla y,V\rra=c\}$.
 	Let $H^+(\pi_V^c)$ (resp. $H^-(\pi_V^c)$) be the halfspace $\{y\in \mathbb{R}^{n+1}\,|\,\lla y,V \rra > c\}$ (resp. $\{y\in \mathbb{R}^{n+1}\,|\,\lla y,V \rra < c\}$). Let $M$ be a hypersurface in $\mathbb{R}^{n+1}$ which bounds the domain $\Omega$ and let $M^{\pi}$ be the reflection of $M$ about $\pi_V^c$, $M^{\pi}=\{y-2(\lla y,V \rra-c)V\,|\,y\in M\}$.
 	 We say that $M$ can be strictly reflected at $\pi_V^c$ if $M^{\pi}\cap H_-(\pi_V^c) \subset \Omega \cap H_-(\pi_V^c)$ and $V$ is not tangent to $M$ along $M \cap \pi_V^c$. 
	 \end{defin}
 	Then, the following result holds:
 	\begin{thm}[Chow-Gulliver \cite{ChowRef,CG2}]\label{CGtheo}
 		Let $\varphi:M\times [0,T) \rightarrow \mathbb{R}^{n+1}$ be an embedded $C^2$ solution of \eqref{ICF}. For $V\in\mathbb{S}^{n}$, $c\in \mathbb{R}$, let $\pi_V^c$ be the plane $\pi_V^c=\{y \in \mathbb{R}^{n+1}\,|\, \lla y,V\rra=c\}$.
 		If $M_0$ can be reflected strictly at $\pi_V^c$, then $M_t$ can be reflected strictly at $\pi_V^c$ for all times $t \in [0,T)$.
 		\end{thm}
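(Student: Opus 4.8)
The plan is to regard the reflected family $M_t^\pi$ as a second solution of the \emph{same} flow and to compare it with $M_t$ in the lower halfspace by the maximum principle, so that the statement becomes an instance of the avoidance principle for \eqref{ICF}, with the extra care required along the mirror plane. First I would establish reflection invariance. The map $R(y)=y-2(\langle y,V\rangle-c)V$ is an isometry of $\mathbb{R}^{n+1}$ carrying $\Omega_t$ onto the domain bounded by $M_t^\pi$; being an isometry, it preserves the principal curvatures of $M_t$ and sends its outer normal to the outer normal $\nu^\pi$ of $M_t^\pi$. Since $F$ is symmetric (the eigenvalues are preserved as an unordered tuple), the speed $1/F$ is unchanged, so $\partial_t(R\circ\varphi)=DR(\nu/F)=\nu^\pi/F$ and $M_t^\pi$ again solves \eqref{ICF}. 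Note that $M_t$ and $M_t^\pi$ always meet along $M_t\cap\pi_V^c$, which is fixed pointwise by $R$.

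Normalizing $V=e_{n+1}$, $c=0$, and writing $H^-:=H^-(\pi_V^c)$, I would then argue by a first-failure time. Set $t_0=\sup\{\tau : M_t \text{ strictly reflects at } \pi_V^c \text{ for all } t\in[0,\tau)\}$; by hypothesis $t_0>0$, and it suffices to exclude $t_0<T$. Strict reflection is an open condition along the smooth flow, so if it held at $t_0$ it would persist past $t_0$, contradicting maximality; hence it fails at $t_0$, while for $t<t_0$ it holds and by continuity the closed containment $M_{t_0}^\pi\cap H^-\subseteq\overline{\Omega_{t_0}}$ survives. Failure of strictness at $t_0$ then occurs in exactly one of two ways: either (i) $M_{t_0}^\pi$ touches $M_{t_0}$ at an interior point $p$ of $H^-$, or (ii) the transversality degenerates, i.e.\ $V\in T_qM_{t_0}$ for some $q\in M_{t_0}\cap\pi_V^c$.

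In both cases I would write $M_t$ and $M_t^\pi$ near the contact point as graphs $f$ and $g$ over their common tangent plane with a shared outer normal; since $M_t^\pi$ is enclosed, $W:=f-g\ge 0$. Each of $f,g$ satisfies the graphical form $\partial_t u=\sqrt{1+|Du|^2}\,/\,F(\lambda[u])$ of \eqref{ICF}, which is locally uniformly parabolic precisely because $\partial F/\partial\lambda_i>0$ in \eqref{ipotesi}. Subtracting and linearizing along $sf+(1-s)g$ yields a linear parabolic equation $\partial_tW=a^{ij}\partial_{ij}W+b^i\partial_iW$ with no zeroth-order term. In case (i), $W(p,t_0)=0$ is a first interior zero of a nonnegative solution, and the strong maximum principle propagates the zero backward in time. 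In case (ii), $q$ is a boundary point on $\pi_V^c$ where $W$ vanishes (the surfaces always meet there), the tangency $V\in T_qM_{t_0}$ forces the boundary normal derivative of $W$ to vanish, and the Hopf boundary-point lemma again yields $W\equiv 0$. Either way $M_t=M_t^\pi$ for all $t\le t_0$, i.e.\ $M_t$ would be symmetric about $\pi_V^c$; but then $M_t^\pi\cap H^-=M_t\cap H^-\subset\partial\Omega_t$ is not contained in the open set $\Omega_t$, contradicting strict reflection for $t<t_0$. Hence $t_0=T$.

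The main obstacle I expect is case (ii), the analysis along the plane $\pi_V^c$: one must convert the geometric non-tangency condition into a genuine boundary condition for $W$, check that near $\pi_V^c$ the two surfaces are bona fide graphs meeting transversally so that the linearized operator stays uniformly parabolic up to the boundary, and then apply the Hopf lemma in this fully nonlinear, moving-boundary situation. By contrast, the reflection-invariance step and the interior touching (i) are comparatively routine applications of the strong maximum principle for \eqref{ICF}.
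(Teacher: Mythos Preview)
The paper does not give its own proof of this statement: Theorem~\ref{CGtheo} is quoted as a result of Chow and Gulliver \cite{ChowRef,CG2} and is used as a black box in the proof of Theorem~\ref{maintheo}. So there is no ``paper's proof'' to compare your proposal against.

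That said, your outline is essentially the argument Chow and Gulliver use: the reflected family is again a solution of \eqref{ICF} because $F$ is a symmetric function of the principal curvatures and reflections are isometries; one then runs a first-time-of-contact argument and applies the strong maximum principle at an interior touching point and the Hopf boundary-point lemma at a tangential contact on $\pi_V^c$. Your identification of case~(ii) as the delicate step is accurate; in the original papers this is handled by writing both hypersurfaces as graphs over the common tangent hyperplane at the boundary contact point and checking that the linearized operator is uniformly parabolic up to the boundary, exactly as you indicate. One small correction: in your final contradiction you should not say that symmetry of $M_t$ about $\pi_V^c$ contradicts strict reflection for $t<t_0$ merely because $M_t^\pi\cap H^-\subset\partial\Omega_t$; rather, the strong maximum principle actually gives $W\equiv 0$ on the whole connected component for all earlier times, hence $M_0$ itself would be symmetric about $\pi_V^c$, contradicting the hypothesis that $M_0$ can be \emph{strictly} reflected there.
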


After these preliminaries, we can give the proof of Theorem 1.		
		 \begin{proof}[Proof of Theorem \ref{maintheo}]
 Without loss of generality, we will assume that $y_{\infty} = 0$. We will show that the assumption of coming out of a point implies that our solution can be strictly reflected at every plane which is close to the origin, but does not contain it, if time is sufficiently close to $T_0$. We then conclude by a limit procedure that $M_t$ is symmetric with respect to every plane through the origin.
 	This will imply that the solution is a sphere for all times.

We begin by taking an arbitrary $t_* \in (T_0,T_1)$. Since the solution is smooth and embedded, there exists $R_* >0$ such that
\begin{equation}\label{tstar}
B_{R_*}(0) \subset \Omega_{t_*}.
\end{equation}
Now let us fix any direction $V \in \mathbb{S}^n$ and any $c \in \mathbb{R}$ with $0< c < R_*$. By \eqref{tstar}, the hyperplane $\pi_V^c$ 
intersects the solution at time $t_*$. On the other hand, the assumption that $M_t$ comes out of a point implies that $M_t$ does not intersect $\pi_V^c$ if $t$ is sufficiently close to $T_0$. Since $M_t$ is compact, there exists a first time $\tau=\tau(V,c) \in (T_0,t^*)$ at which $M_t$ touches the hyperplane, i.e.
$$M_\tau \cap \pi_V^c\neq \emptyset, \qquad M_t \subset H_-(\pi_V^c) \mbox{ for all }t<\tau.$$
Using the property that $M_t$ comes out of a point we also find that, for any fixed $V$,
\begin{equation}\label{tau}
\tau(V,c) \to T_0 \mbox{ as }c \to 0.
\end{equation}

We claim that, for $t$ larger than $\tau$ but sufficiently close to $\tau$,  the solution $M_t$ can be strictly reflected at $\pi_V^c$. Intuitively, the property follows from the tangency of $M_t$ to $\pi_V^c$ at time $t=\tau$ and from the expansive character of the flow. We now proceed to give a precise motivation of the claim.

We denote the contact set at time $\tau$ as $M^c=M_\tau \cap \pi_V^c$. Then we have $\nu(p, \tau) = V$ for $p \in M$ such that $\varphi(p,\tau) \in M^c$.
    We fix $0<\epsilon<<1$ and define $M_\nu=\{p\in M\,|\, |\nu(p,\tau)-V|<\epsilon\}$ and the complement of its image $I_\nu=M_\tau \setminus \varphi(M_{\nu},\tau)$. The set $I_\nu$ is compact in $\mathbb{R}^{n+1}$ and does not intersect $\pi_V^c$, thus the distance $d=dist(I_\nu,\pi_V^c)$ is positive.
    
    We fix $\delta > 0$ such that, for all $t \in [\tau,\tau+\delta]$ and for all $p \in M$:
    \begin{align}
    |\varphi(p,t)-\varphi(p,\tau)|&<\frac d4,\label{condist}\\
    |\nu(p,t)-\nu(p,\tau)|&< \epsilon. \label{norcond}
    \end{align}

    These conditions are clearly satisfied if $\delta$ is small enough, since $M_t$ is smooth and compact, and so both $\varphi$ and $\nu$ are uniformly continuous on compact intervals of time.
        
    Condition \eqref{norcond} ensures, for $p \in M_{\nu}$, that 
    \begin{equation}\label{disnorm}
    |\nu(p,t)-V|\leq |\nu(p,t)-\nu(p,\tau)|+|\nu(p,\tau)-V|<2\epsilon.
    \end{equation}
 Since $\nu$ is the outer normal, property \eqref{disnorm} implies that, for all $p \in M_\nu$ there exists $\sigma_0>0$ such that
 \begin{equation}\label{segmeq}
  \varphi(p,t) +\sigma V \notin \Omega_t, \quad \varphi(p,t) -\sigma V \in \Omega_t, \quad \forall\, 0<\sigma<\sigma_0
 \end{equation}
 
 	Now, we consider $M_t^+=M_{t}\cap H_+(\pi_V^c)$, for $t \in (\tau, \tau+\delta]$: it is the part of the evolving hypersurface which has moved across the plane $\pi_V^c$  and it is nonempty, as the normal to any point in $M^c$ is $V$ and $F>0$ holds. By \eqref{condist}, all points of $M_t^+$ have distance less than $d/4$ from $\pi_V^c$. On the other hand, if $p \notin M_\nu$, its image $\varphi(p,t)$ is at distance at least $\frac 34 d$ from $\pi_V^c$, again by \eqref{condist} and by the definition of $d$. It follows that $M_t^+$ is contained in the image of $M_{\nu}$ under the immersion $\varphi(\cdot,t)$. Then \eqref{disnorm} shows that the normal $\nu$ is close to $V$ at all points of $M_t^+$, and we see in particular that, at the points where $M_{t}$ intersects $\pi_V^c$, $V$ does not belong to the tangent space to $M_t$.

 We then want to show that all points in the reflection of $M_t^+$ about $\pi_V^c$ are contained in $\Omega_t$ if $t \in (\tau, \tau+\delta]$.  As already observed, both $M_t^+$ and its reflection lie at distance less than $d/4$ from the plane $\pi_V^c$. We then fix any $y \in M_t^+$ and consider the segment $y - \sigma V$, with $\sigma \in [0,2d_y]$, where $d_y=dist(y,\pi_V^c)$. By \eqref{segmeq}, the points of the segment lie in the enclosed region $\Omega_t$ for small $\sigma>0$. If there is a first $\bar \sigma \in (0,2d_y]$ such that $y - \bar \sigma V \in M_t$, the same arguments as above imply that $y - \bar \sigma V$ is the image of a point in $M_\nu$, and therefore \eqref{segmeq} gives a contradiction. Therefore the segment is entirely contained in $\Omega_t$.
 
Since $y \in  M_t^+$, this shows that $M_{t}$ can be reflected at $\pi_V^c$ for all $ t \in (\tau, \tau+\delta]$; Theorem \ref{CGtheo} ensures that the same is true for all following times. The same argument holds for every $0< c < R_*$; by letting $c \to 0$ and using \eqref{tau}, 
we conclude that $M_{t}$ can be reflected, possibly nonstrictly, at the plane $\pi_V^0$ passing through the origin, for all $t> T_0$. Since this is true for any direction $V$, this implies that $M_t$ is symmetric about any plane through the origin and it is thus a sphere.
 	
 	 \end{proof}
  \bibliographystyle{abbrv}
  \bibliography{biblio2}
 	\end{document}